\newtheorem{theorem}{Theorem}[section]
\newtheorem{lemma}[theorem]{Lemma}
\newtheorem{corollary}[theorem]{Corollary}
\newtheorem{proposition}[theorem]{Proposition}
\tikzstyle{noeud}=[circle,inner sep=2, minimum size =3 pt, line width = 1pt, draw=black, fill=white]
\tikzstyle{noeud2}=[circle,inner sep=1.4, minimum size =3 pt, line width = 1pt, draw=black, fill=white]
\DeclareMathOperator {\sg} {sg}
\DeclareMathOperator {\diam} {diam}
\begin{document}

\title{Strong geodetic number of complete bipartite graphs, crown graphs and hypercubes}

\author{
    Valentin Gledel $^{a}$
    \and
	Vesna Ir\v si\v c $^{b,c}$
}

\date{\today}

\maketitle
\begin{center}
	$^a$ Univ Lyon, Universit\'e Lyon 1, LIRIS UMR CNRS 5205, F-69621, Lyon, France \\
	\medskip

	$^b$ Institute of Mathematics, Physics and Mechanics, Ljubljana, Slovenia\\
	\medskip

	$^c$ Faculty of Mathematics and Physics, University of Ljubljana, Slovenia\\
		
\end{center}

\begin{abstract}
The strong geodetic number, $\text{sg}(G),$ of a graph $G$ is the smallest number of vertices such that by fixing one geodesic between each pair of selected vertices, all vertices of the graph are covered. In this paper, the study of the strong geodetic number of complete bipartite graphs is continued. The formula for $\text{sg}(K_{n,m})$ is given, as well as a formula for the crown graphs $S_n^0$. Bounds on $\text{sg}(Q_n)$ are also discussed.
\end{abstract}

\noindent {\bf Key words:} geodetic number; strong geodetic number; complete bipartite graph; crown graph; hypercube

\medskip\noindent
{\bf AMS Subj.\ Class:} 05C12, 05C70

\section{Introduction}
\label{sec:intro}

The three mainly studied variations of covering vertices of a graph with shortest paths (also called \emph{geodesics}) are the geodetic problem~\cite{bresar-2008, bresar-2011, bueno-2018, chartrand-2000, dourado-2010, hansen-2007, HLT93, jiang-2004,PanChang-2006,soloff-2015}, the isometric path problem~\cite{clarke-2008, Fisher2001, fitzpatrick-1999}, and the strong geodetic problem. The latter aims to determine the smallest number of vertices needed, such that by fixing one geodesic between each pair of selected vertices, all vertices of a graph are covered. More formally, the problem is introduced in~\cite{MaKl16a} as follows. 

Let $G=(V,E)$ be a graph. Given a set $S\subseteq V$, for each pair of vertices $\{x,y\}\subseteq S$, $x\ne y$, let $\widetilde{g}(x,y)$ be a {\em selected fixed} shortest path between $x$ and $y$. We set  
$$\widetilde{I}(S)=\{\widetilde{g}(x, y) : x, y\in S\}\,,$$ and $V(\widetilde{I}(S))=\bigcup_{\widetilde{P} \in \widetilde{I}(S)} V(\widetilde{P})$. If $V(\widetilde{I}(S)) = V$ for some $\widetilde{I}(S)$, then the set $S$ is called a {\em strong geodetic set}. For a graph $G$ with just one vertex, we consider the vertex as its unique strong geodetic set. The {\em strong geodetic problem} is to find a minimum strong geodetic set of $G$. The cardinality of a minimum strong geodetic set is the {\em strong geodetic number} of $G$ and is denoted by $\sg(G)$. Such a set is also called an \emph{optimal strong geodetic set}.

In the first paper on the topic~\cite{MaKl16a}, the strong geodetic number of complete Apollonian networks is determined and it is proved that the problem is NP-complete in general. Also, some comparisons are made with the isometric path problem. The problem has also been studied on grids and cylinders~\cite{Klavzar+2017}, and on Cartesian products in general~\cite{products}. Additional results about the problem on Cartesian products, as well as a notion of a strong geodetic core, has been recently studied in~\cite{cores}. Along with the strong geodetic problem, an edge version of the problem was also introduced in~\cite{MaKl16b}.

The strong geodetic problem appears to be difficult even on complete bipartite graphs. Some initial investigation is done in~\cite{bipartite}, where the problem is presented as an optimization problem and the solution is found for balanced complete bipartite graphs. Some more results have been very recently presented in~\cite{bipartite2}, where it is proved that the problem is NP-complete on general bipartite graphs, but polynomial on complete bipartite graphs. The asymptotic behavior of the strong geodetic problem on them is also discussed. 

In this paper we continue the study on bipartite graphs, specifically on the complete bipartite graphs, crown graphs, and hypercubes. In Section~\ref{sec:complete_bipartite}, we determine the explicit formula for complete bipartite graphs. In Section~\ref{sec:matching}, we discuss the strong geodetic number of crown graphs. In the last section, an upper and lower bound for the strong geodetic number of hypercubes are investigated. 

To conclude this section we state some basic definitions. Recall that a \emph{crown graph} $S_n^0$ is a complete bipartite graph $K_{n,n}$ without a perfect matching. Recall also that a \emph{hypercube} $Q_n$ is a graph on the vertex set $\{0, 1\}^n$, where two vertices are adjacent if and only if they differ in exactly one bit.

\section{Complete bipartite graphs}
\label{sec:complete_bipartite}

Strong geodetic number of complete bipartite graphs has been widely studied, as mentioned in Section~\ref{sec:intro}. For completeness we state some already known results.

\begin{theorem}[\cite{bipartite}, Theorem 2.1]
	\label{thm:balanced}
	If $n \geq 6$, then
	$$\sg(K_{n,n}) = \begin{cases}
	2 \left \lceil \displaystyle \frac{-1 + \sqrt{8 n + 1}}{2} \right \rceil, & 8n - 7 \text{ is not a perfect square},\vspace{0.2cm}\\
	2 \left \lceil \displaystyle \frac{-1 + \sqrt{8 n + 1}}{2} \right \rceil - 1, & 8n - 7 \text{ is a perfect square}.
	\end{cases}$$
\end{theorem}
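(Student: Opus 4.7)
The plan is to reduce the problem to an elementary integer optimization and then analyze two cases based on the arithmetic of $n$.

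First, I would exploit the bipartite structure. Let $A$ and $B$ be the two sides of $K_{n,n}$, and write an arbitrary strong geodetic set as $S = S_A \cup S_B$ with $a = |S_A|$ and $b = |S_B|$. In $K_{n,n}$ only two kinds of geodesic are relevant: a geodesic between two vertices of the same part has length $2$ and passes through one internal vertex (in the opposite part), while a geodesic between vertices of different parts is an edge and covers no new vertex. Hence the $\binom{a}{2}$ pairs inside $S_A$ must cover the $n - b$ vertices of $B \setminus S_B$, and the $\binom{b}{2}$ pairs inside $S_B$ must cover the $n - a$ vertices of $A \setminus S_A$. Since the intermediate vertex of such a geodesic can be chosen to be \emph{any} vertex of the opposite part, a strong geodetic set of sizes $(a,b)$ exists if and only if $\binom{a}{2} \geq n - b$ and $\binom{b}{2} \geq n - a$.

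Next I would minimize $a + b$ subject to these two constraints. Assuming $a \leq b$ by symmetry, a short comparison shows that once $a + b \geq 3$ the binding constraint is $\binom{a}{2} + b \geq n$. For fixed $N = a + b$ this expression is maximized by the most balanced choice $(a,b) = (\lfloor N/2 \rfloor, \lceil N/2 \rceil)$. Setting $N = 2k$ yields the condition $\binom{k+1}{2} \geq n$, while setting $N = 2k - 1$ yields $\binom{k}{2} \geq n - 1$.

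Let $k = \lceil (-1 + \sqrt{1 + 8n})/2 \rceil$, the smallest integer with $\binom{k+1}{2} \geq n$. Then $a = b = k$ is feasible and gives $\sg(K_{n,n}) \leq 2k$. To decide whether the strictly smaller value $2k - 1$ is attainable I would test the condition $\binom{k}{2} \geq n - 1$. Minimality of $k$ forces $\binom{k}{2} \leq n - 1$, so the sharper inequality can hold only with equality, i.e., $k(k-1) = 2(n-1)$. Solving this quadratic gives $k = (1 + \sqrt{8n - 7})/2$, which is a positive integer precisely when $8n - 7$ is a perfect square; conversely, in that case this value of $k$ matches the ceiling expression, and the pair $(k-1, k)$ is feasible. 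The hypothesis $n \geq 6$ is used to dispose of the small cases where the comparison between the two constraints or the assumption $a + b \geq 3$ would fail.

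I expect the main obstacle to lie in the arithmetic bookkeeping of the third step: one must verify carefully that the integer $k$ defined by the ceiling expression equals $(1 + \sqrt{8n - 7})/2$ exactly when $8n - 7$ is a perfect square, so the two descriptions of the critical threshold line up. Once that equivalence is in hand, both cases of the displayed formula follow immediately from the feasibility analysis of the integer program.
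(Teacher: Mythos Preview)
The paper does not itself prove this theorem---it is quoted from \cite{bipartite}---but it proves the analogous statement for crown graphs (Theorem~\ref{thm:matching}) ``using similar techniques,'' so one can compare against that template. Your argument is correct and follows the same overall plan: reduce to the balanced case $a=b=k$ and the nearly balanced case $(a,b)=(k-1,k)$, compute the threshold inequality in each, and finish with the perfect-square arithmetic.

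The one genuine difference is in how the reduction to $|a-b|\leq 1$ is obtained. The paper's route (visible in Lemma~\ref{lem:max1}) is a separate ``balancing lemma'' proved by explicitly rebuilding a strong geodetic set with a smaller gap $|a-b|$. You instead observe that in $K_{n,n}$ a pair of sizes $(a,b)$ is realizable as a strong geodetic set \emph{iff} $\binom{a}{2}\ge n-b$ and $\binom{b}{2}\ge n-a$, because the middle vertex of a length-$2$ geodesic can be chosen freely; the identity $\bigl(\binom{b}{2}+a\bigr)-\bigl(\binom{a}{2}+b\bigr)=(b-a)(a+b-3)/2$ then makes the balanced split optimal without any construction. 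This shortcut is specific to $K_{n,n}$; for $S_n^0$ the missing matching destroys the ``free choice'' and the paper genuinely needs the constructive lemma. Your final step---showing that the odd value $2k-1$ is attainable exactly when $\binom{k}{2}=n-1$, i.e.\ when $8n-7$ is a perfect square, and that this $k$ agrees with the ceiling expression---is routine once one checks $\binom{k+1}{2}=\binom{k}{2}+k\ge n$ for $k\ge 1$, so the ``main obstacle'' you anticipate is not a real difficulty.
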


Note also that $\sg(K_{1,m}) = m$ for all positive integers $m$, as $K_{1,m}$ is a tree with $m$ leaves. Hence, in the following results, this case is omitted.

To determine $\sg(K_{n,m})$, we will need the following definitions and notation. We will denote the extension of an integer valued function $\varphi$ to the real values by $\widetilde{\varphi}$. Let $3 \leq n \leq m$ be integers. Define $g(p) = m - \binom{p}{2}$ for $p \in \{0\ldots, n\}$ and $\widetilde{g}(p) = m - \binom{p}{2}$ for $p \in \mathbb{R}$ as a continuous extension of $g$. 

For $p \in \{0, 1, \ldots, n\}$ define $f(p) = \min \{ q \in \mathbb{Z} : \ \binom{q}{2} \geq n-p \}$ and its continuous extension $\widetilde{f} (p) = \frac{1 + \sqrt{1 + 8 (n-p)}}{2}$, the solution of $\binom{q}{2} = n-p$, where $p, q \in \mathbb{R}$. Observe that for $p < n$, $f(p) = \lceil \widetilde{f}(p) \rceil$, but $0 = f(n) \neq \lceil \widetilde{f} (n) \rceil = 1$.  

Define also $F(k) = k + f(k)$, $G(k) = k + g(k)$ and $s(k) = \max \{ F(k), G(k) \}$, for all $k \in \{0, 1, \ldots, n\}$. 
Note that whenever the functions defined above are used, the integers $n$ and $m$ will be clear from the context.

\begin{lemma}
	\label{lem:sg}
	If $3 \leq n \leq m$, then $\sg(K_{n,m}) = \min \{ s(k): \ 0 \leq k \leq n, k \in \mathbb{Z} \}$.
\end{lemma}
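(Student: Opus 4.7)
The plan is to parameterize a strong geodetic set $S$ of $K_{n,m}$ by $k = |S \cap A|$, where $A$ and $B$ denote the parts of sizes $n$ and $m$ respectively, and to show that the minimum feasible size is exactly $s(k)$. Throughout I would write $\ell = |S \cap B|$, so that $|S| = k+\ell$.

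For the lower bound I would exploit that $K_{n,m}$ has diameter $2$, so every non-trivial geodesic consists of exactly one internal vertex lying in the part opposite to both its endpoints. Consequently, a fixed geodesic between two vertices of $S \cap A$ has its internal vertex in $B$; a fixed geodesic between two vertices of $S \cap B$ has its internal vertex in $A$; and a fixed geodesic between an element of $S \cap A$ and an element of $S \cap B$ is a single edge and covers no vertex outside $S$. Every vertex of $B \setminus S$ must therefore appear as the internal vertex of one of the at most $\binom{k}{2}$ fixed geodesics between pairs of $S\cap A$, yielding $m-\ell \leq \binom{k}{2}$, i.e.\ $\ell \geq g(k)$. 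Symmetrically, covering $A\setminus S$ forces $n-k \leq \binom{\ell}{2}$, i.e.\ $\ell \geq f(k)$. Hence $|S|=k+\ell \geq k+\max\{f(k),g(k)\} = s(k)$, and since $k$ ranges over $\{0,\ldots,n\}$ we conclude $\sg(K_{n,m}) \geq \min_k s(k)$.

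For the upper bound I would fix a $k$ achieving $\min_k s(k)$, set $\ell = \max\{f(k),g(k)\}$, and explicitly construct a strong geodetic set of size $k+\ell$. Choose any $k$ vertices from $A$ and any $\ell$ from $B$ to form $S$. Because $m-\ell \leq \binom{k}{2}$ we can fix an injection from $B\setminus S$ into the pairs of $S\cap A$, and for each such pair declare its fixed geodesic to go through the assigned internal vertex; the unused pairs in $S\cap A$ receive any $B$-vertex as internal vertex. Handle the pairs in $S\cap B$ symmetrically using $n-k \leq \binom{\ell}{2}$, and give each cross-pair its unique edge as fixed geodesic. The union of these paths then contains every vertex of $K_{n,m}$, so $S$ is a strong geodetic set of size $s(k)$, proving $\sg(K_{n,m}) \leq \min_k s(k)$.

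I expect no essential obstacle; the proof is essentially a clean counting argument with a matching construction. The main care lies in the boundary cases $k \in \{0,1\}$ (and by symmetry $\ell \in \{0,1\}$), where $\binom{k}{2} = 0$ forces $\ell = m$, and in checking that the value $\ell = \max\{f(k),g(k)\}$ prescribed by the optimization never exceeds $m$, so that the construction is actually realizable inside $B$. Both verifications reduce to the easy estimate $\binom{m}{2} \geq n$, which holds whenever $3 \leq n \leq m$, so no additional work is needed beyond a brief remark.
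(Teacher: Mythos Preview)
Your proposal is correct and follows essentially the same approach as the paper: parameterize by $k=|S\cap A|$, derive the two covering inequalities $\ell\geq g(k)$ and $\ell\geq f(k)$, and take the minimum over $k$. You are somewhat more explicit than the paper in spelling out the construction for the upper bound and in verifying the feasibility condition $\ell\leq m$, but the argument is the same.
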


\begin{proof}
Let $(X, Y)$, $|X| = n$, $|Y| = m$, be the bipartition of $K_{n,m}$. Let $S_k$ be a minimal strong geodetic set of the graph, which has exactly $k$ vertices in $X$. Denote $l = |S_k \cap Y|$. As $Y$ must be covered, $l \geq m - \binom{k}{2} = g(k)$ (vertices of $Y$ are covered by being in a strong geodetic set or by a geodesic of length two between two vertices in the strong geodetic set in $X$). 

As $X$ must also be covered, $l$ must be such that $\binom{l}{2} \geq n - k$. Thus by definition of $f$, $l \geq f(k)$.

If $l \geq g(k)$ and $l \geq f(k)$, then both $X$ and $Y$ are covered. Hence by the minimality of $S_k$, we have $l = \max \{f(k), g(k) \}$.

Thus 
\begin{align*}
	\sg(K_{n,m}) & = \min \{ |S_k| : \ 0 \leq k \leq n \}  \\
	& = \min \{ k + \max \{f(k), g(k) \} : \ 0 \leq k \leq n \}  \\
	& = \min \{s(k) : \ 0 \leq k \leq n \} \,.  \qedhere
\end{align*}
\end{proof}

The main idea behind the following result is that $\sg(K_{n,m})$ is probably close to the value of $\min \{\max \{k + \widetilde{f}(k), k + \widetilde{g}(k)\} : \ 0 \leq k \leq n \}$. But before we state the more general result, consider the case $n = 2$, which has already been studied in~\cite[Corollary 2.3]{bipartite2}.

\begin{proposition}
	If $m \geq 2$, then $\sg(K_{2,m}) = 
	\begin{cases}
	3 ; & m = 2,\\
	m ; & m \geq 3.
	\end{cases}
	$
\end{proposition}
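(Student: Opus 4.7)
The plan is to handle $m=2$ and $m\geq 3$ separately by a direct structural argument, since Lemma~\ref{lem:sg} requires $n\geq 3$ and cannot be invoked here. Throughout, I would let $(X,Y)$ with $X=\{x_1,x_2\}$ and $|Y|=m$ denote the bipartition of $K_{2,m}$.

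For $m=2$ the graph is the $4$-cycle. I would first rule out every $2$-element set: an adjacent pair yields a fixed geodesic equal to a single edge, covering only its endpoints; an antipodal pair (both vertices on the same side of the bipartition) admits exactly two length-$2$ geodesics, and fixing either one leaves the opposite vertex uncovered. I would then exhibit the strong geodetic set $\{x_1,x_2,y_1\}$, where fixing $\widetilde{g}(x_1,x_2)=x_1 y_2 x_2$ covers all four vertices, giving $\sg(K_{2,2})=3$.

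For $m\geq 3$ I would establish the upper bound $\sg(K_{2,m})\leq m$ by taking $S=Y$. Any two vertices of $Y$ are at distance $2$, and the two geodesics between them pass through $x_1$ and $x_2$ respectively. Since $\binom{m}{2}\geq 3\geq 2$, I can fix one pair's geodesic through $x_1$ and another's through $x_2$, covering all of $X$ while $Y\subseteq S$. For the matching lower bound I would suppose, for contradiction, that $S$ is strong geodetic with $|S|\leq m-1$, and write $a=|S\cap X|\in\{0,1,2\}$ and $b=|S\cap Y|$. The key structural observation is that the only geodesic in $K_{2,m}$ whose interior meets $Y$ is the unique length-$2$ path between $x_1$ and $x_2$, which passes through exactly one vertex of $Y$; every edge and every length-$2$ path between two $Y$-vertices has interior either empty or inside $X$. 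Hence at most one vertex of $Y\setminus S$ is covered internally, and only when $a=2$. Using $b\leq m-1-a$ so that $|Y\setminus S|\geq a+1$, a short case split on $a$ closes the argument: for $a\leq 1$ no interior $Y$-vertex is covered while at least $a+1\geq 1$ is missing from $S$; for $a=2$ at least three $Y$-vertices lie outside $S$ but the single $x_1$–$x_2$ geodesic covers at most one.

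The main obstacle, mild as it is, is pinning down the structural observation about which geodesics of $K_{2,m}$ can cover interior $Y$-vertices; once that is recorded, both the construction and the case analysis are one-line verifications. The lower-bound count is tight, matching the witness $S=Y$.
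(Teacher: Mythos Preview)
Your argument is correct. The paper itself does not prove this proposition; it simply records the result with a reference to \cite[Corollary~2.3]{bipartite2} and moves on to Theorem~\ref{thm:main}. So your direct, self-contained treatment is not competing against any argument in the present paper---it is supplying one where the paper offers none.

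A couple of remarks on the write-up. First, your structural observation is exactly the right leverage point: in $K_{2,m}$ the only geodesics with an interior vertex in $Y$ are the $x_1$--$x_2$ geodesics, and since only one such geodesic is fixed, at most one vertex of $Y\setminus S$ can be covered internally, and only when $a=2$. This immediately forces the case split to collapse, as you describe. Second, in the upper-bound construction for $m\geq 3$, the inequality you actually need is $\binom{m}{2}\geq 2$, which is where $m\geq 3$ enters (for $m=2$ you have only one pair in $Y$, hence only one geodesic, and cannot cover both of $x_1,x_2$---which is consistent with $\sg(K_{2,2})=3$). Your text says $\binom{m}{2}\geq 3\geq 2$, which is true for $m\geq 3$ but slightly obscures the threshold; you may want to tighten that sentence.
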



\begin{theorem}
	\label{thm:main}
	If $3 \leq n \leq m$, then 
	$$\sg(K_{n,m}) = \begin{cases}
	m ; \; & n-3 \geq \binom{m-3}{2}\, ,\\
	m + n - \binom{n}{2} ; \; & m \geq \binom{n}{2} \, ,\\
	n ; \; & \binom{n}{2} > m \geq 3 + \binom{n-3}{2} \, , \\
	\min \left\{ G(\lceil x^* \rceil - 1), F(\lceil x^* \rceil) \right \}; \; & \text{otherwise} \, ,
	\end{cases}$$
	where $3 \leq x^* \leq n-3$ is a solution of $m - \binom{x}{2} = \frac{1 + \sqrt{1 + 8(n-x)}}{2}$. 
\end{theorem}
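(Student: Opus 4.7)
By Lemma~\ref{lem:sg}, the task reduces to minimising the integer function $s(k)=\max\{F(k),G(k)\}$ over $k\in\{0,1,\dots,n\}$, so my plan is to pin down where this minimum is attained by analysing the shapes of $F$ and $G$. A direct computation gives $G(k+1)-G(k)=1-k$, so $G$ is weakly increasing on $\{0,1,2\}$ (taking values $m,m+1,m+1$) and strictly decreasing on $\{2,\dots,n\}$, with $G(3)=m$ and $G(n)=m+n-\binom{n}{2}$. For $F$, using $\widetilde{f}'(x)=-2/\sqrt{1+8(n-x)}$ one shows $|\widetilde{f}'|<1$ on $[0,n-1]$, which forces $f(k+1)\ge f(k)-1$ and makes $F$ non-decreasing on $\{0,\dots,n-1\}$; only at the top does one meet the exceptional values $F(n-1)=n+1$ and $F(n)=n$. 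Since $F$ is essentially non-decreasing and $G$ is eventually strictly decreasing, the minimum of $s$ is attained either at an extreme index ($k=0$ or $k=n$) or at the integer ``crossover'' where $F$ overtakes $G$.

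The crossover is controlled by the real solution $x^*$ of $\widetilde{f}(x)=\widetilde{g}(x)$: writing $h(x)=\widetilde{f}(x)-\widetilde{g}(x)$, a short derivative estimate gives $h'(x)>0$ on $[3,n]$, so $x^*$ is unique whenever it lies in this interval, and since $g(k)$ is integer-valued the sign of $h$ at an integer $k$ already determines whether $F(k)>G(k)$ or $F(k)\le G(k)$. The three boundary cases then correspond to $x^*$ lying outside $(3,n-3)$. In Case~2 ($m\ge\binom{n}{2}$), $\widetilde{g}$ dominates throughout $[0,n]$, so $s$ is minimised at $k=n$, where one checks $F(n)=n\le G(n)$ and $G(k)\ge G(n)$ for every $k$, yielding $m+n-\binom{n}{2}$. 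In Case~3 ($\binom{n}{2}>m\ge 3+\binom{n-3}{2}$), the lower bound on $m$ is precisely the inequality $G(n-3)\ge n$, while $F(k)\ge n$ for $k\in\{n-3,n-2,n-1\}$ follows from the explicit small values of $f$; combined with the monotonicities this forces $s(k)\ge n$ for every $k$ with equality at $k=n$. In Case~1 ($n-3\ge\binom{m-3}{2}$), the symmetric condition $\widetilde{f}(3)\ge\widetilde{g}(3)$ gives $F(3)\ge m$, and then $F$ non-decreasing propagates $F(k)\ge m$ on $\{3,\dots,n-1\}$ while $G(k)\ge m$ already holds on $\{0,1,2,3\}$, yielding $s(k)\ge m$ with equality at $k=0$.

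Case~4 is the generic one and the main obstacle. Here $x^*\in(3,n-3)$ is the unique real crossover, and the plan is to show that $\min_k s(k)$ is attained at $k=\lceil x^*\rceil$ or $k=\lceil x^*\rceil-1$. By the sign analysis of $h$ one has $F(\lceil x^*\rceil)\ge G(\lceil x^*\rceil)$ and $F(\lceil x^*\rceil-1)\le G(\lceil x^*\rceil-1)$, so the two candidate values of $s$ are $F(\lceil x^*\rceil)$ and $G(\lceil x^*\rceil-1)$; the monotonicities of $F$ and $G$ then rule out every other index, since any $k>\lceil x^*\rceil$ gives $F(k)\ge F(\lceil x^*\rceil)$ and any $k<\lceil x^*\rceil-1$ (including $k=0,1,2$) gives $G(k)\ge G(\lceil x^*\rceil-1)$. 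The technical hurdle is the interaction of the ceiling in $F=k+\lceil\widetilde{f}\rceil$ with the integer nature of $G$: one must verify that the real crossover really is sandwiched between the integers $\lceil x^*\rceil-1$ and $\lceil x^*\rceil$ after discretisation, and that neither the boundary value $F(n)=n$ (the downward jump) nor the values at $k=1,2$ ever produce a strictly smaller $s$. Once these checks are in place, the answer is $\min\{G(\lceil x^*\rceil-1),F(\lceil x^*\rceil)\}$, completing the theorem.
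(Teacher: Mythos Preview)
Your overall strategy matches the paper's: reduce via Lemma~\ref{lem:sg} to minimising $s(k)=\max\{F(k),G(k)\}$, establish the monotonicity of $F$ and $G$ (the paper isolates these as Lemmas~\ref{lem:G} and~\ref{lem:F}), locate the unique real crossing $x^*$ of $\widetilde f$ and $\widetilde g$ (Lemma~\ref{lem:intersection}), pin down the sign of $F-G$ at $\lceil x^*\rceil$ and $\lceil x^*\rceil-1$ (Lemma~\ref{lem:properties}), and then run the four-case split. Your treatment of Cases~2, 3, and 4 is essentially the paper's argument, stated with a bit more care about the ceilings and the endpoint anomalies.

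The genuine gap is in Case~1. Your argument shows $s(k)\ge m$ only for $k\in\{0,\dots,n-1\}$: you use $G(k)\ge m$ on $\{0,1,2,3\}$ and then propagate $F(k)\ge F(3)\ge m$ along $\{3,\dots,n-1\}$ by the monotonicity of $F$. But the index $k=n$ is never handled, and it cannot be absorbed into this scheme because $F$ drops to $F(n)=n$ there while $G(n)=m+n-\binom{n}{2}$ may also be small. Concretely, for $(n,m)=(4,5)$ one computes $s(4)=\max\{4,3\}=4<m$, so the blanket lower bound $s(k)\ge m$ you are aiming for is simply false. The paper does not attempt a uniform argument in Case~1: it observes that $n-3\ge\binom{m-3}{2}$ forces $(n,m)$ to be one of six explicit pairs and checks them by hand. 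You should do the same, since the endpoint $k=n$ (exactly the boundary anomaly you flag in Case~4) defeats any monotonicity-propagation proof here; indeed the pair $(4,5)$ shows that $s(n)$ can undercut $m$, so the endpoint has to be inspected case by case.
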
 

Observe that the first case simplifies to $(n,m) \in \{ (3,3),$ $(3,4),$ $(4,4),$ $(4,5),$ $(5,5),$ $(6,6) \}$  and that the ``otherwise'' case appears if and only if $m \geq n \geq 7$ and $m \leq 3 + \binom{n-3}{2}$. Note that the second case is indeed a known result from~\cite[Corollary 2.3]{bipartite2}, but here we present a different proof for it.

As the proof consists of some rather technical details, we shall first prove some useful lemmas. 

\begin{lemma}
	\label{lem:G}
	For $3 \leq k \leq n$, the function $G(k)$ is strictly decreasing. Moreover, we have $G(0) = G(3) = m$ and $G(1) = G(2) = m+1$. The same holds for the function $\widetilde{g}(k)$.
\end{lemma}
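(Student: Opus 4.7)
The plan is to prove everything by direct substitution and an elementary finite-difference (resp.\ derivative) computation from the definition $G(k)=k+g(k)=k+m-\binom{k}{2}$.

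First I would check the four stated identities by plugging in: $G(0)=m-0=m$, $G(1)=1+m-0=m+1$, $G(2)=2+m-1=m+1$, $G(3)=3+m-3=m$. This takes care of the ``moreover'' clause immediately.

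Next, for strict monotonicity on $\{3,\ldots,n\}$, I would compute the forward difference
\[
G(k+1)-G(k)=1-\Bigl(\binom{k+1}{2}-\binom{k}{2}\Bigr)=1-k,
\]
which is strictly negative for every $k\geq 2$. In particular $G(k+1)<G(k)$ for all $k\in\{3,\ldots,n-1\}$, so $G$ is strictly decreasing on $\{3,\ldots,n\}$, as required. (The computation actually shows that $G$ is strictly decreasing on $\{2,\ldots,n\}$, which is consistent with the equalities $G(0)=G(3)=m$ and $G(1)=G(2)=m+1$ acting as a brief ``plateau then rise and fall'' at the start.)

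For the continuous counterpart, I would differentiate $\widetilde{G}(x)=x+m-\tfrac{x(x-1)}{2}$ to obtain $\widetilde{G}'(x)=\tfrac{3}{2}-x$, which is strictly negative for $x>\tfrac{3}{2}$, hence on $[3,n]$; and similarly $\widetilde{g}'(x)=-\tfrac{2x-1}{2}<0$ for $x>\tfrac{1}{2}$, so $\widetilde{g}$ is strictly decreasing on $[3,n]$ as well. There is no real obstacle here beyond being careful to use the identity $\binom{k+1}{2}-\binom{k}{2}=k$ and to note that the values at $k=0,1,2$ are not part of the monotonicity range, which is why the equalities $G(0)=G(3)$ and $G(1)=G(2)$ do not contradict the strict-decrease statement.
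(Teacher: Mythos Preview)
Your proof is correct and follows essentially the same approach as the paper, only spelled out in more detail: the paper's proof is the single sentence ``It follows from the fact that $k-\binom{k}{2}$ is strictly decreasing for $k\geq 3$,'' whereas you make this explicit via the forward difference $G(k+1)-G(k)=1-k$ and verify the four boundary values by direct substitution. Your additional derivative computations for the continuous extension are likewise routine and cover what the paper leaves implicit.
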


\proof 
It follows from the fact that $k - \binom{k}{2}$ is strictly decreasing for $k \geq 3$.
\qed

\begin{lemma}
	\label{lem:F}
	For $0 \leq k \leq n-1$, $F(k)$ and $\widetilde{f} (k)$ are increasing (not necessarily strictly). Additionally, we have $F(n-1) = n+1$ and $F(n) = n$. Moreover, $|F(k+1) - F(k)| \leq 1$ for all $0 \leq k \leq n-1$. 
\end{lemma}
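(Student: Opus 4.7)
My plan is to establish the three assertions in the statement — the endpoint values $F(n-1)=n+1$ and $F(n)=n$, monotonicity, and the Lipschitz-type bound $|F(k+1)-F(k)|\le 1$ — by direct manipulation of the defining inequalities for $f$: by definition $\binom{f(k)}{2}\ge n-k$, while minimality forces $\binom{f(k)-1}{2}<n-k$ whenever $f(k)\ge 1$.

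The endpoints are immediate. Since $\binom{2}{2}=1$ and $\binom{1}{2}=0$, we get $f(n-1)=2$, hence $F(n-1)=n+1$. Since $\binom{0}{2}=0\ge 0$, we get $f(n)=0$, hence $F(n)=n$. For the continuous analogue $\widetilde{F}(k)=k+\widetilde{f}(k)$ (which I take to be the intended reading of the second monotonicity claim, as $\widetilde f$ itself is plainly decreasing), differentiation gives
\[
\widetilde{F}'(k)=1-\frac{2}{\sqrt{1+8(n-k)}},
\]
which is nonnegative iff $n-k\ge 3/8$. This holds for all $k$ in the range $0\le k\le n-1$, so $\widetilde F$ is non-decreasing there.

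For the discrete $F$, I will prove $0\le F(k+1)-F(k)\le 1$, which yields both monotonicity and the Lipschitz bound at once; equivalently, $-1\le f(k+1)-f(k)\le 0$. The upper bound $f(k+1)\le f(k)$ is immediate because $n-(k+1)<n-k\le\binom{f(k)}{2}$, so $f(k)$ is a competitor in the minimum defining $f(k+1)$. The only real content is the lower bound $f(k+1)\ge f(k)-1$. I would argue by contradiction: if $f(k+1)\le f(k)-2$, then the definitions give
\[
n-k-1\;\le\;\binom{f(k+1)}{2}\qquad\text{and}\qquad \binom{f(k+1)+1}{2}\;\le\;\binom{f(k)-1}{2}\;<\;n-k.
\]
Subtracting these inequalities yields $f(k+1)=\binom{f(k+1)+1}{2}-\binom{f(k+1)}{2}<1$, so $f(k+1)\le 0$. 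But for $k\le n-2$ we have $n-k-1\ge 1$, and since $\binom{0}{2}=\binom{1}{2}=0<n-k-1$, the definition of $f(k+1)$ forces $f(k+1)\ge 2$, a contradiction.

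The only step with any bite is this last one, and even there the argument is a one-line combinatorial identity $\binom{q+1}{2}-\binom{q}{2}=q$; the rest is reading off the definitions. I therefore expect the proof to be quite short, following exactly the outline above.
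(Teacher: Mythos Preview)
Your proof is correct and follows essentially the same route as the paper: the paper's one-line argument is that for $0\le p\le n-1$ with $f(p)=q$ one has $f(p+1)\in\{q,q-1\}$, which is exactly the pair of inequalities $-1\le f(k+1)-f(k)\le 0$ you establish; you simply supply the details the paper omits. Your observation that the literal claim ``$\widetilde f$ is increasing'' must be read as ``$k+\widetilde f(k)$ is increasing'' is also correct, and your derivative check handles it cleanly.
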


\proof
The claim follows from the fact that if $0 \leq p \leq n-1$ and $f(p) = q$, then $f(p+1) \in \{q, q-1\}$ by definition of $f$. 
\qed

\begin{lemma}
	\label{lem:intersection}
	If $n-3 < \binom{m-3}{2}$, $m -3 \leq \binom{n-3}{2}$ and $n \geq 3$, then the functions $\widetilde{f}$ and $\widetilde{g}$ intersect exactly once on $[3,n-3]$. 
	The condition is equivalent to $(n,m) \in  \{ (n,m) : \ n \geq 7, n \leq m \leq 3 + \binom{n-3}{2} \}$.
\end{lemma}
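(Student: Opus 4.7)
The plan is to reduce both halves of the statement to an analysis of the single continuous function $h(p) := \widetilde{g}(p) - \widetilde{f}(p)$ on $[3,n-3]$. Specifically, I will show that $h$ is strictly decreasing with $h(3) > 0$ and $h(n-3) \leq 0$, so the intermediate value theorem yields exactly one zero of $h$, which is the unique intersection of $\widetilde{g}$ and $\widetilde{f}$ on that interval.

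First I would read off the endpoint values. Direct substitution gives $\widetilde{f}(n-3) = \tfrac{1+\sqrt{25}}{2} = 3$ and $\widetilde{g}(n-3) = m - \binom{n-3}{2}$, so the hypothesis $m-3 \leq \binom{n-3}{2}$ is literally $h(n-3) \leq 0$. At the left endpoint, the equivalence $h(3) > 0 \Leftrightarrow n-3 < \binom{m-3}{2}$ follows by squaring the inequality $2(m-3) - 1 > \sqrt{1+8(n-3)}$ and collecting terms; this squaring is valid because the hypotheses force $m \geq 4$ (the alternative $m = 3$ forces $n = 3$ and fails the first hypothesis).

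Next I would prove strict monotonicity of $h$ by differentiating: $\widetilde{g}'(p) = \tfrac12 - p$ is at most $-\tfrac52$ on $[3,n-3]$, while $|\widetilde{f}'(p)| = 2/\sqrt{1+8(n-p)} \leq \tfrac25$ because $n-p \geq 3$ gives $\sqrt{1+8(n-p)} \geq 5$. Summing, $h'(p) \leq -\tfrac52 + \tfrac25 < 0$ on the whole interval, so $h$ is strictly decreasing; combined with the endpoint signs, IVT supplies a unique zero.

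For the equivalence of the hypotheses with $\{(n,m) : n \geq 7,\ n \leq m \leq 3+\binom{n-3}{2}\}$, the second hypothesis is literally the upper bound on $m$. Combined with $n \leq m$, it forces $n-3 \leq \binom{n-3}{2}$, which fails for $n \in \{4,5\}$ and for $n = 6$ pins $m = 6$, at which point the first hypothesis degenerates to $3 < 3$; the case $n = 3$ gives $0 < 0$ in the same manner. So $n \geq 7$ is necessary. Conversely, for $m \geq n \geq 7$ one has $m - 4 \geq 3$, hence $\binom{m-3}{2} = (m-3)(m-4)/2 \geq 3(m-3)/2 > m - 3 \geq n - 3$, so the first hypothesis is automatic. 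The main bookkeeping step is the squaring at $p = 3$; everything else is a routine derivative bound or inequality check.
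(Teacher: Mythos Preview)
Your proof is correct and follows essentially the same strategy as the paper: translate the two numerical hypotheses into the sign conditions $\widetilde g(3)>\widetilde f(3)$ and $\widetilde g(n-3)\le\widetilde f(n-3)$, show that $\widetilde g-\widetilde f$ is strictly decreasing on $[3,n-3]$, and apply the intermediate value theorem. The only differences are cosmetic---you justify monotonicity by an explicit derivative bound rather than by invoking the separate monotonicity lemmas for $\widetilde f$ and $\widetilde g$, and you spell out the ``simple calculation'' for the equivalence of the hypotheses with $\{n\ge 7,\ n\le m\le 3+\binom{n-3}{2}\}$ that the paper omits.
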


\proof
The simplification of the conditions can be checked by a simple calculation.

Moreover, the first condition implies $\widetilde{f}(3) \leq \widetilde{g}(3)$, and the second implies $\widetilde{f}(n-3) \geq \widetilde{g}(n-3)$. As $\widetilde{f}$ is increasing and $\widetilde{g}$ is strictly decreasing on $[3,n-3]$ (and $n \geq 7$), it follows that $\widetilde{f}$ and $\widetilde{g}$ intersect exactly once on $[3,n-3]$.
\qed

\begin{lemma}
	\label{lem:properties}
	For $m \geq n \geq 7$ and $m \leq 3 + \binom{n-3}{2}$, let $\widetilde{f}$ and $\widetilde{g}$ intersect in $x^* \in [3,n-3]$. Then it holds:\\
	(i) $F(\lceil x^* \rceil) \geq G(\lceil x^* \rceil)$,\\
	(ii) $G(\lceil x^* \rceil-1) \geq F(\lceil x^* \rceil-1)$.
\end{lemma}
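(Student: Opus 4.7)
The plan is to exploit the fact that $g$ coincides with its continuous extension $\widetilde{g}$ on integers, while $f = \lceil \widetilde{f} \rceil$ by construction, so that each desired inequality between $F$ and $G$ at an integer $k$ reduces to comparing $\widetilde{f}$ and $\widetilde{g}$ at that $k$; Lemma~\ref{lem:intersection} then supplies the sign of $\widetilde{g}-\widetilde{f}$.

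First I would locate $\lceil x^* \rceil$. The hypothesis $m \geq n \geq 7$ and $m \leq 3 + \binom{n-3}{2}$ is precisely the equivalent reformulation in Lemma~\ref{lem:intersection}, so both of that lemma's conditions hold, and in particular $n - 3 < \binom{m-3}{2}$ is \emph{strict}. A routine rearrangement shows that this strict inequality is equivalent to $\widetilde{f}(3) < \widetilde{g}(3)$; combined with the uniqueness of the intersection at $x^*$, this forces $x^* > 3$, so $k := \lceil x^* \rceil \geq 4$ and $k-1 \geq 3$. Since $x^* \leq n-3 \in \mathbb{Z}$, both $k$ and $k-1$ also lie in $[3, n-3]$. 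By Lemma~\ref{lem:intersection}, on this interval one has $\widetilde{g} \geq \widetilde{f}$ on $[3, x^*]$ and $\widetilde{g} \leq \widetilde{f}$ on $[x^*, n-3]$.

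For (i), since $k \geq x^*$, the above gives $\widetilde{f}(k) \geq \widetilde{g}(k)$. Now $g(k) = \widetilde{g}(k)$ is an integer and $f(k) = \lceil \widetilde{f}(k) \rceil \geq \widetilde{f}(k)$, so $f(k) \geq g(k)$ and hence $F(k) \geq G(k)$. For (ii), set $k' = k-1$; whether or not $x^*$ is an integer, one checks $k' < x^*$, so $\widetilde{g}(k') \geq \widetilde{f}(k')$. Thus $g(k') = \widetilde{g}(k')$ is an integer at least $\widetilde{f}(k')$, and since $f(k') = \lceil \widetilde{f}(k') \rceil$ is by definition the smallest such integer, $g(k') \geq f(k')$, giving $G(k') \geq F(k')$.

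The only genuinely delicate point is the strict inequality $x^* > 3$, without which $k-1$ could drop to $2$ and escape the interval where Lemma~\ref{lem:intersection} controls the sign of $\widetilde{g}-\widetilde{f}$; once that is secured, both parts reduce to the one-line observation that integrality of $g$ plus the identity $f = \lceil \widetilde{f} \rceil$ let real inequalities pass directly to integer ones.
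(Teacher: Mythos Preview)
Your proof is correct and follows essentially the same route as the paper: compare $\widetilde f$ and $\widetilde g$ at the integers $\lceil x^*\rceil$ and $\lceil x^*\rceil-1$ using the crossing at $x^*$, then lift to $f$ and $g$ via $g=\widetilde g$ on integers and $f=\lceil\widetilde f\rceil$. If anything, you are more careful than the paper, since you explicitly secure $x^*>3$ (so $\lceil x^*\rceil-1$ stays in $[3,n-3]$) and you spell out the ceiling argument rather than asserting $f=\widetilde f$ on integers.
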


\proof
As we are on the interval $[3,n-3]$ it suffices to prove both properties for $\widetilde{f}$ and $\widetilde{g}$ instead of $F$ and $G$, as $f(\lceil x \rceil) = \widetilde{f}(\lceil x \rceil)$ for $x \in [3, n-3]$ and similarly for $g$ and $\widetilde{g}$. 

(i) As $\widetilde{f}$ is increasing, $\widetilde{g}$ is strictly decreasing, $\widetilde{f}(x^*)=\widetilde{g}(x^*)$, and $x^* \leq \lceil x^* \rceil$, it follows that $\widetilde{f}(\lceil x^* \rceil) \geq \widetilde{g}(\lceil x^* \rceil)$.

(ii) As $\widetilde{f}$ is increasing, $\widetilde{g}$ is strictly decreasing, $\widetilde{f}(x^*)=\widetilde{g}(x^*)$, and $\lceil x^* \rceil - 1 \leq x^*$, it follows that $\widetilde{f}(\lceil x^* \rceil - 1) \leq \widetilde{f}(x^*) = \widetilde{g}(x^*) < \widetilde{g}(\lceil x^* \rceil - 1)$.
\qed

\begin{proof}[Proof of Theorem~\ref{thm:main}] Recall that by Lemma~\ref{lem:sg}, $\sg(K_{n,m}) = \min \{ s(k): \ 0 \leq k \leq n, k \in \mathbb{Z} \}$ $=$ $\min \{ \max\{F(k), G(k)\}: \ 0 \leq k \leq n, k \in \mathbb{Z} \}$.

\textit{Case 1:} Let $n-3 \geq \binom{m-3}{2}$. The only possibilities are $(n,m) \in \{ (3,3),$ $(3,4),$ $(4,4),$ $(4,5),$ $(5,5),$ $(6,6) \}$. For all of them we can easily check that the optimal value is $m$.  

\textit{Case 2:} Let $m \geq \binom{n}{2}$. Thus, $G(n) \geq F(n)$. As $n \geq 3$ and $G$ is strictly decreasing, it also holds $G(n-1) \geq F(n-1)$. If $n = 3$, then $f(0) = f(3) = 3$ and $f(4) = f(5) = 4$. Thus the minimum is attained in either $k = 0$ or $k = n$, but the value is the same in both cases and equals $m + n - \binom{n}{2} = m$. If $n \geq 4$, then $G(k) \geq F(k)$ for all $0 \leq k \leq n$ (by the properties of $F$ and $G$) and the minimum is attained in $k = n$, hence the value is $m + n - \binom{n}{2}$.

\textit{Case 3:} Let $\binom{n}{2} > m \geq 3 + \binom{n-3}{2}$. Thus $G(n) < F(n)$ and $G(n-3) \geq F(n-3)$. Hence by the properties of $G$ and $F$, the minimum is attained in $k = n$, hence the value is $F(n) = n$. 

\textit{Case 4:} Lastly, we study the ``otherwise'' case, i.e.\ the case when $m \geq n \geq 7$ and $m \leq 3 + \binom{n-3}{2}$. By Lemma~\ref{lem:intersection}, there exists an $x^* \in [3, n-3]$ such that $\widetilde{f}(x^*) = \widetilde{g} (x^*)$. Clearly, $\min \max \{\widetilde{f}, \widetilde{g}\}$ on $[0, n]$ is attained in $x^*$. But as $x^*$ is not necessarily an integer, we must further investigate the properties of $F$ and $G$ on integer values close to $x^*$. By Lemma~\ref{lem:properties}(i), $F(\lceil x^* \rceil) \geq G(\lceil x^* \rceil)$, thus also $F(k) \geq G(k)$ for all $k \geq \lceil x^* \rceil$. By Lemma~\ref{lem:properties}(ii), $G(\lceil x^* \rceil-1) \geq F(\lceil x^* \rceil-1)$, thus $G(k) \geq F(k)$ for all $k \leq \lceil x^* \rceil-1$. Hence, $$\min \{ \max \{ F(k), G(k) \}: \ 0 \leq k \leq n, k \in \mathbb{Z} \} = \min \left\{ G(\lceil x^* \rceil - 1), F(\lceil x^* \rceil) \right \}\, . \qedhere$$ 
\end{proof}

As already mentioned, asymptotic behavior of $\sg(K_{n,m})$ is presented in~\cite[Theorem 2.5]{bipartite2}. Two special cases of this behavior are a direct consequence of the second and third case in the above Theorem~\ref{thm:main}, but determining the asymptotic behavior for a general case is not trivial even with the result of Theorem~\ref{thm:main}.

\section{Crown graphs}
\label{sec:matching}

In the following we determine $\sg(S_n^0)$, using similar techniques as in the proof of Theorem~\ref{thm:balanced}~\cite{bipartite}.

\begin{lemma}
	\label{lem:max1}
	Let $T = T_1 \cup T_2$ be a strong geodetic set of $S_n^0$, $n \geq 2$, with bipartition $(X, Y)$, where $T_1 \subseteq X$, $T_2 \subseteq Y$ and $t_i = |T_i|$ for all $i \in \{1,2\}$. If $|t_1 - t_2| \geq 2$, then there exists a strong geodetic set $T' = T'_1 \cup T'_2$, $T'_1 \subseteq X$, $T'_2 \subseteq Y$, such that $|T'| = |T|$ and $|t'_1 - t'_2| < |t_1 - t_2|$, where $t'_i = |T'_i|$ for $i \in \{1,2\}$.
\end{lemma}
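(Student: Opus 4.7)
The plan is to use a swap argument. Without loss of generality suppose $t_1 \geq t_2 + 2$; I will construct $T'$ by removing a carefully chosen vertex $x^*$ from $T_1$ and inserting a carefully chosen vertex $y^* \in Y \setminus T_2$. Then $T' = (T \setminus \{x^*\}) \cup \{y^*\}$ satisfies $|T'| = |T|$, $t'_1 = t_1 - 1$, $t'_2 = t_2 + 1$, whence $|t'_1 - t'_2| = |t_1 - t_2| - 2 < |t_1 - t_2|$. The only real content is to show that $T'$ remains a strong geodetic set.

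To choose the swap, I invoke pigeonhole: since $t_1 > t_2$, there is an index $i^*$ with $x_{i^*} \in T_1$ and $y_{i^*} \notin T_2$. My primary choice is $x^* = x_{i^*}$ and $y^* = y_{i^*}$. This has the pleasant property that the set of ``matched indices'' $\{i : x_i \in T_1 \text{ and } y_i \in T_2\}$ is unchanged (because $i^*$ was not in it before and is not in it after), so the length-$3$ matched-pair geodesics (the only geodesics in $S_n^0$ that are absent in $K_{n,n}$) retain their full coverage contribution.

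To verify the strong geodetic property for $T'$, I will construct an assignment $\widetilde{I}(T')$ starting from $\widetilde{I}(T)$: retain every fixed geodesic whose endpoint pair still lies in $T'$, and for each new pair involving $y^*$ assign either the edge $\{y^*, x_a\}$ (well-defined since $a \neq i^*$) or a length-$2$ path $y^* - x_j - y_k$ with $j \notin \{i^*, k\}$. Thus the $t_1 - 1$ discarded pairs $\{x^*, x_a\}$ are replaced by $t_2$ fresh pairs $\{y^*, y_k\}$ whose middles live in $X$. The one vertex whose coverage becomes uncertain is $x^*$ itself: in the cases $t_2 \geq 2$ (where two vertices of $T_2$ avoid index $i^*$, so the geodesic $y_a - x_{i^*} - y_b$ can be chosen) or $M \geq 1$, with $M$ the number of matched indices (where a matched-pair geodesic with index $\neq i^*$ can be routed through $x_{i^*}$), the covering of $x^*$ is immediate. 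The extreme remaining case $t_2 = 0$ forces $T_1 = X$ and is handled by the modified swap $y^* = y_{j^*}$ with $j^* \neq i^*$, which creates a new matched pair at index $j^*$ whose length-$3$ geodesic covers $x^*$; a similar adjustment covers the delicate subcase $t_2 = 1$ when there is no matched pair.

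The main obstacle is the bookkeeping beyond the coverage of $x^*$: any $y$-vertex previously covered by a geodesic $\{x^*, x_a\}$ must now be re-covered by a different pair, and if we re-route an existing geodesic to pass through $x_{i^*}$ then the vertex it used to cover must also be reassigned. I plan to handle this via a Hall-type slack argument: the $\binom{t_1-1}{2}$ remaining $T'_1$-pairs together with the $t_2$ new pairs through $y^*$ provide enough coverage capacity in both $X$ and $Y$ to accommodate all required reassignments. The tightness is worst when $t_2$ is small, so the case analysis for $t_2 \leq 1$ will be the most delicate step.
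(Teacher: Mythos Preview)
Your proposal is a plan rather than a proof: the decisive step—the ``Hall-type slack argument'' that re-covers the up to $t_1-1$ vertices in $Y$ formerly served by the discarded pairs $\{x^*,x_a\}$, together with the cascade triggered by re-routing a geodesic through $x_{i^*}$—is announced but never carried out. Until that argument is written down, the proposal does not establish that $T'$ is a strong geodetic set.

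The approach also differs materially from the paper's. You perform a \emph{local} swap on the given $T$ and try to repair $\widetilde{I}(T)$. The paper instead discards $T$ entirely, extracting from it only the numerical consequence $n \le t_1 + t_2 + \binom{t_2}{2}$ (this follows from ``$T$ covers $X$'' since $M\le t_2$), and then builds a brand-new $T'=\{x_1,\dots,x_{t_1-1}\}\cup\{y_1,\dots,y_{t_2+1}\}$ with the maximum possible number $t_2+1$ of matched pairs. For this specific $T'$ the paper writes down explicit geodesics and checks coverage directly; the numerical bound is exactly what is needed to close the count. This global construction sidesteps all of the reassignment bookkeeping you identify as the main obstacle.

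Your particular swap $y^*=y_{i^*}$ also carries a structural cost: it keeps $M'=M$, whereas the paper's $T'$ has $M'=t_2+1$, and those extra length-$3$ geodesics are what make the explicit covering work. Numerically your swap does survive for $t_2\ge 1$ (using the $X$-bound for $T$ one can check $(t_2+1)+\binom{t_1-1}{2}+M\ge n$ and $(t_1-1)+\binom{t_2+1}{2}+M\ge n$), but converting those inequalities into an actual assignment for your $T'$ still needs a system-of-distinct-representatives argument that you have not supplied. If you pursue this, the cleanest route is to prove once that any set with part sizes $(p,q)$ and $M$ matched pairs is strong geodetic whenever $p+\binom{q}{2}+M\ge n$ and $q+\binom{p}{2}+M\ge n$; at that point, however, you are essentially reproducing the paper's argument, and its direct construction is shorter.
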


\proof
Let $X = \{ x_1, \ldots, x_n \}$, $Y = \{ y_1, \ldots, y_n \}$ and $x_i \sim y_i$ for $i \in [n]$ be a removed perfect matching. 
Without loss of generality, we assume $t_1 \geq t_2$ and let $t_1 - t_2 = k \geq 2$, $T_1 = \{x_1, \ldots, x_{t_2 + k}\}$ and $T_2 = \{ y_1, \ldots, y_{t_2} \}$. Note that geodesics between $x_{t_2+k}$ and $T_2$ are just edges. 

First consider the case $t_2 = 0$. As $T$ is a strong geodetic set, $t_1 = n$. Consider $T' = (X - \{x_n\}) \cup \{y_1\}$. Clearly, $|T'| = |T|$ and $n-2 = |t_1' - t_2'| < |t_1 - t_2| = n$. To see that $T'$ is a strong geodetic set, fix the following geodesics: $x_1 \sim y_2 \sim x_n \sim y_1$ and $x_1 \sim y_{i+1} \sim x_i$ for all $2 \leq i \leq n-1$. 

Next consider the case $t_2 = \min\{t_1, t_2\} \geq 1$. Define $T' = T_1' \cup T_2'$ where $T_1' = T_1 - \{x_{t_2 + k}\}$ and $T_2' = T_2 \cup \{y_{t_2 + 1}\}$. As $t_2 \geq 1$, we have $0 \leq t_i' \leq n$. Clearly, $|T'| = |T|$ and $|t_1' - t_2'| < |t_1 - t_2|$. In the following we prove that $T'$ is a strong geodetic set. First cover $x_{t_2 + k - 1}$ by a geodesic $y_1 \sim x_{t_2 + k} \sim y_{t_2+1}$ (as $t_2 \geq 1$ and $k \geq 2$). Next fix the following geodesics: $x_i \sim y_{t_2 + k + i} \sim x_{t_2 + k + i+1} \sim y_i$ for $i \in [t_2 - 1]$ and $x_{t_2} \sim y_{t_2 + k + t_2} \sim x_{t_2 + k + 1} \sim y_{t_2}$, and $\binom{t_2}{2}$ geodesics between vertices in $\{x_1, \ldots, x_{t_2}\}$ and also between $\{y_1, \ldots, y_{t_2}\}$ to cover vertices $\{ x_{t_2+k+t_2+1}, \ldots, x_{t_2 + k + t_2 + \binom{t_2}{2}} \} \cup \{ y_{t_2+k+t_2+1}, \ldots, y_{t_2 + k + t_2 + \binom{t_2}{2}} \}$. If $n < t_2 + k + t_2 + \binom{t_2}{2}$, fix geodesics in a similar manner (but in this case not all are needed). On the other hand, $t_2 + k + \binom{t_2}{2} + t_2 \geq n$ as $T_1 \cup T_2$ is a strong geodetic set and thus covers $X$. The only uncovered vertices are then $\{y_{t_2 + 2}, \ldots, y_{t_2+k}\}$, hence $k-1$ vertices in $Y$. They can be covered by the (not yet used) geodesics $x_1 \sim y_{i+1} \sim x_i$ for $i \in \{t_2+1, \ldots, t_2+k-1\}$. 
\qed

\begin{theorem}
	\label{thm:matching}
	If $n \geq 2$, then
	$$\sg(S_n^0) = \begin{cases}
	2 \left \lceil \displaystyle \frac{-3 + \sqrt{8 (n+1) + 1}}{2} \right \rceil, & 8n + 1 \text{ is not a perfect square},\vspace{0.2cm}\\
	2 \left \lceil \displaystyle \frac{-3 + \sqrt{8 n + 1}}{2} \right \rceil + 1, & 8n + 1 \text{ is a perfect square}.
	\end{cases}$$
	For an optimal strong geodetic set $S = S_1 \cup S_2$ it holds $|S_1| = |S_2| = \left \lceil \frac{-3 + \sqrt{8 n + 9}}{2} \right \rceil$, if $8n + 1$ is not a perfect square, otherwise $|S_1| = \left \lceil \frac{-3 + \sqrt{8 n + 1}}{2} \right \rceil$ and $|S_2| = |S_1| + 1$.
\end{theorem}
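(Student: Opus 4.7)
The plan is to apply Lemma~\ref{lem:max1} to reduce to two cases --- balanced sets with $|S_1| = |S_2| = t$ and off-by-one sets with $\{|S_1|, |S_2|\} = \{t, t+1\}$ --- derive in each case a coverage inequality, construct matching examples for sufficiency, and finally compare the two resulting minima.

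For necessity, the first step is to count how each pair of vertices of $S = S_1 \cup S_2$ can cover vertices outside $S$. In $S_n^0$, a pair inside $S_1$ gives a length-$2$ geodesic through a single vertex of $Y$, a pair inside $S_2$ covers one vertex of $X$, a ``matched'' cross-pair $(x_i, y_i)$ (a removed matching edge, both endpoints in $S$) is at distance $3$ and covers one internal vertex on each side, while every other cross-pair is just an edge. Writing $M$ for the number of matched cross-pairs, so that $M \le \min(|S_1|,|S_2|)$, the coverage inequalities $\binom{|S_1|}{2} + M \ge n - |S_2|$ and $\binom{|S_2|}{2} + M \ge n - |S_1|$, after maximising $M$, yield the threshold $t(t+3) \ge 2n$ in the balanced case and $(t+1)(t+2) \ge 2n$ in the unbalanced case, where the $Y$-coverage inequality binds because $(t+1)(t+2) \le t(t+5)$ for $t \ge 1$. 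Solving for the smallest admissible integer $t$ gives $\lceil (-3+\sqrt{8n+9})/2\rceil$ and $\lceil (-3+\sqrt{8n+1})/2\rceil$, with total set sizes $2t$ and $2t+1$ respectively.

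For sufficiency, the plan is an explicit construction. Take $S_1 = \{x_1,\dots,x_{t_1}\}$ and $S_2 = \{y_1,\dots,y_{t_2}\}$, and use $\min(t_1,t_2)$ matched-pair geodesics $x_i \sim y_{t_1+i} \sim x_{t_1+\sigma(i)} \sim y_i$ with $\sigma$ a fixed-point-free permutation of $[\min(t_1,t_2)]$ (or a direct ad-hoc choice when $\min(t_1,t_2) \le 1$), and cover the remaining high-index $x$- and $y$-vertices with the $\binom{t_1}{2}$ and $\binom{t_2}{2}$ same-side pairs. All forbidden subscripts (matching partners of the endpoints of a same-side pair) lie in $[\max(t_1,t_2)]$, disjoint from the target indices, so no adjacency constraint fails; the coverage bound above guarantees that enough pairs remain.

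The comparison reduces to $2b$ versus $2a+1$, where $a = \lceil (-3+\sqrt{8n+1})/2\rceil$ and $b = \lceil (-3+\sqrt{8n+9})/2\rceil$. Clearly $b \ge a$, and $b = a+1$ occurs precisely when an integer lies strictly between the two ceiling arguments; a short arithmetic/parity calculation shows this is equivalent to $8n+1 = (2k+3)^2$, i.e.\ $8n+1$ being a perfect square, since the only other candidate $2n = k^2+3k+1$ is excluded because $k(k+3)$ is always even. Hence the unbalanced construction beats the balanced one by exactly one in the perfect-square case, matching the piecewise formula and the stated sizes of $|S_1|$ and $|S_2|$. The main obstacle will be the sufficiency step: the counting inequality alone does not yield a conflict-free simultaneous assignment of geodesics, and verifying that the contiguous-index construction with the fixed-point-free permutation uses each uncovered vertex at most once and respects every adjacency constraint requires careful bookkeeping, particularly at the small or boundary values of $t$.
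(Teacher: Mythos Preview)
Your approach mirrors the paper's exactly: reduce via Lemma~\ref{lem:max1} to $|p-q|\le 1$, derive the two coverage inequalities $n\le p+\binom{q}{2}+p$ and $n\le q+\binom{p}{2}+p$, solve each of the balanced and off-by-one cases for the minimal $t$, and then compare the two minima. Your mod-$8$ parity argument for the comparison (that an odd square in $[8n+1,8n+8]$ forces $(2k+3)^2=8n+1$) is a tidier version of the paper's case split on the parity of $\lfloor\sqrt{8n+1}\rfloor$, but it reaches the same conclusion.

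The one substantive difference is that you treat sufficiency explicitly: the paper writes down the inequalities and declares the optimum without ever exhibiting a strong geodetic set achieving it, whereas you give a concrete construction. Your construction is correct in the balanced case. In the off-by-one case, note that the offset in your matched-pair geodesic $x_i\sim y_{t_1+i}\sim x_{t_1+\sigma(i)}\sim y_i$ must be $\max(t_1,t_2)$ rather than $t_1$; otherwise, when $t_2=t_1+1$, the vertex $y_{t_1+1}$ already lies in $S_2$ and one unit of $Y$-coverage is wasted, so the tight inequality $(t+1)(t+2)\ge 2n$ no longer suffices. Equivalently, just take $t_1\ge t_2$ by symmetry; with that convention your construction goes through and your bookkeeping about forbidden indices (all at most $\max(t_1,t_2)$, hence disjoint from the target range) is correct.
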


\proof
Let $S = S_1 \cup S_2$ be a strong geodetic set of $S_n^0$ where $|S| = \sg(S_n^0)$ and $S_1$, $S_2$ each lie in one part of the bipartition. Let $p = |S_1|$ and $q = |S_2|$. Without loss of generality, $p \leq q$.

As $S$ is a strong geodetic set and the geodesics in $S_n^0$ are either edges (between a vertex in $S_1$ and a vertex in $S_2$), paths of length 2 (between two vertices in $S_i$) or paths of length 3 (between unconnected vertices in $S_1$ and $S_2$), it holds:
$$n \leq p + \binom{q}{2} + p \, ,$$
$$n \leq q + \binom{p}{2} + p \, .$$
From Lemma~\ref{lem:max1} it follows that we can assume $|p-q| \leq 1$. Hence, we distinguish two cases.

\textit{Case 1:} Let $p = q$. The inequalities simplify to $n \leq 2p + \binom{p}{2}$ for $0 \leq p \leq n$, which is equivalent to $p \geq  \frac{-3 + \sqrt{8 n + 9}}{2}$. Hence, the optimal value is $p = q = \left \lceil \frac{-3 + \sqrt{8 n + 9}}{2} \right \rceil$ and $|S| = 2 p$. 

\textit{Case 2:} Let $q = p + 1$. The inequalities simplify to $n \leq 2 p + \binom{p+1}{2} = 3p + \binom{p}{2}$ and $n \leq 2 p + 1 + \binom{p}{2}$. As $n \geq 2$, we have $p \geq 1$, so the second inequality gives a stronger constraint. Hence, $p \geq  \frac{-3 + \sqrt{8 n + 1}}{2}$. The optimal value is $p = \left \lceil \frac{-3 + \sqrt{8 n + 1}}{2} \right \rceil$, $q = p+1$ and $|S| = 2p+1$. 

Next, we determine the strong geodetic number of $S_n^0$, i.e.\ determine which case gives rise to a smaller value of $p+q$. Let $a = \left \lceil \frac{-3 + \sqrt{8 n + 9}}{2} \right \rceil$, $b = \left \lceil \frac{-3 + \sqrt{8 n + 1}}{2} \right \rceil$, $\sg_a = 2 a$ and $\sg_b = 2 b + 1$. We distinguish two cases: $8 n + 1$ is a perfect square or not.

\textit{Case 1:} If $8n+1$ is a perfect square. There exists an integer $m$ such that $m^2 = 8n+1$. Thus $m$ is odd and $m \geq 5$ (as $n \geq 2$). Hence, $\left \lceil \frac{-3 + m}{2} \right \rceil = \frac{m-3}{2}$ and $\sg_b = m-2$. On the other hand, $m^2 < 8n+9 \leq (m+1)^2$, thus $a \leq \left \lceil \frac{-3 + m+1}{2} \right \rceil = \frac{m-1}{2}$ and $\sg_a = m-1$. In this case $\sg_b < \sg_a$.

\textit{Case 2:} If $8n + 1$ is not a perfect square. There exists an integer $m$ such that $m^2 < 8n+1 < (m+1)^2$. Notice that $m \geq 4$ as $n \geq 2$. Thus $$b = \left \lceil \frac{-3 + m+1}{2} \right \rceil = \begin{cases}
\frac{m-1}{2}, & m \text{ odd},\\
\frac{m-2}{2}, & m \text{ even},
\end{cases}$$
and $$\sg_b = \begin{cases}
m, & m \text{ odd},\\
m-1, & m \text{ even}.
\end{cases}$$ 
On the other hand, $8n+9 < (m+1)^2 + 8 < (m+2)^2$ as $m \geq 4$. Thus $$a \leq \left \lceil \frac{-3 + m+2}{2} \right \rceil = \begin{cases}
\frac{m-1}{2}, & m \text{ odd},\\
\frac{m}{2}, & m \text{ even},
\end{cases}$$
and $$\sg_a \leq \begin{cases}
m-1, & m \text{ odd},\\
m, & m \text{ even}.
\end{cases}$$ 
Hence, if $m$ is odd, $\sg_a < \sg_b$. But if $m$ is even, then $m+1$ is odd and by~\cite[Lemma 2.2]{bipartite}, there exists an integer $k$ such that $(m+1)^2 = 8k+1$. Clearly, $8n+9 \leq (m+1)^2 + 7$, but due to the congruences modulo $8$, we conclude that $8n+9 \leq (m+1)^2$. Thus $a \leq \left \lceil \frac{-3 + m+1}{2} \right \rceil = \frac{m-2}{2}$ and $\sg_a = m-2$. Hence in this case we also have $\sg_a < \sg_b$, which concludes the proof. 
\qed

A very special case of a complete bipartite graph without a perfect matching is a cube $Q_3 \cong S_4^0$. By Theorem~\ref{thm:matching}, $\sg(Q_3) = 4$. In the next section, we study hypercubes more thoroughly.

\section{Hypercubes}
\label{sec:cubes}

In the last section, we discuss the strong geodetic problem on another family of bipartite graphs, namely hypercubes. It is known that the strong geodetic problem on bipartite graphs is NP-complete~\cite{bipartite2}, thus it would be optimistic to expect an explicit formula. Recall that a hypercube $Q_n$ has $2^n$ vertices and diameter $n$. First, we consider some small hypercubes.

Clearly, $\sg(Q_0) = 1$, $\sg(Q_1) = 2$ and $\sg(Q_2) = 3$. We already know that $\sg(Q_3) = 4$. Using a computer program, we can also check that $\sg(Q_4) = 5$. For an example of the smallest strong geodetic sets, see Figure~\ref{fig:small_cubes}.

\begin{figure}[ht]
	\begin{center}
		\begin{tikzpicture}[scale=0.8]
		
		\begin{scope}
		\node[noeud, fill=black] (000) at (0,0){};
		\node[noeud] (001) at (3,0){};
		\node[noeud] (010) at (0,3){};
		\node[noeud] (011) at (3,3){};
		\node[noeud] (100) at (1,1){};
		\node[noeud, fill=black] (101) at (2,1){};
		\node[noeud, fill=black] (110) at (1,2){};
		\node[noeud, fill=black] (111) at (2,2){};
		
		\draw (000) -- (001) -- (011) -- (010) -- (000) -- (100) -- (101) -- (111) -- (110) -- (100);
		\draw (010) -- (110);
		\draw (011) -- (111);
		\draw (001) -- (101);
		\end{scope}
		
		\begin{scope}[xshift=5cm]
		\node[noeud] (0000) at (0,0){};
		\node[noeud, fill=black] (0001) at (3,0){};
		\node[noeud] (0010) at (0,3){};
		\node[noeud] (0011) at (3,3){};
		\node[noeud] (0100) at (1,1){};
		\node[noeud] (0101) at (2,1){};
		\node[noeud] (0110) at (1,2){};
		\node[noeud] (0111) at (2,2){};
		
		\node[noeud, fill=black] (1001) at (5,0){};
		\node[noeud] (1000) at (8,0){};
		\node[noeud] (1011) at (5,3){};
		\node[noeud] (1010) at (8,3){};
		\node[noeud] (1101) at (6,1){};
		\node[noeud, fill=black] (1100) at (7,1){};
		\node[noeud, fill=black] (1111) at (6,2){};
		\node[noeud, fill=black] (1110) at (7,2){};
		
		\draw (0000) -- (0001) -- (0011) -- (0010) -- (0000) -- (0100) -- (0101) -- (0111) -- (0110) -- (0100);
		\draw (0010) -- (0110);
		\draw (0011) -- (0111);
		\draw (0001) -- (0101);
		
		\draw (1000) -- (1001) -- (1011) -- (1010) -- (1000) -- (1100) -- (1101) -- (1111) -- (1110) -- (1100);
		\draw (1010) -- (1110);
		\draw (1011) -- (1111);
		\draw (1001) -- (1101);
		
		\draw (0001) -- (1001);
		\draw (0011) -- (1011);
		\draw (0101) -- (1101);
		\draw (0111) -- (1111);
		
		\draw (0000) to [out=-20,in=-160] (1000);
		\draw (0010) to [out=20,in=160] (1010);
		\draw (0100) to [out=-20,in=-160] (1100);
		\draw (0110) to [out=20,in=160] (1110);
		\end{scope}

		\end{tikzpicture}
	\end{center}
	\caption{Hypercubes $Q_3$ and $Q_4$ with their strong geodetic sets.}
	\label{fig:small_cubes}
\end{figure}
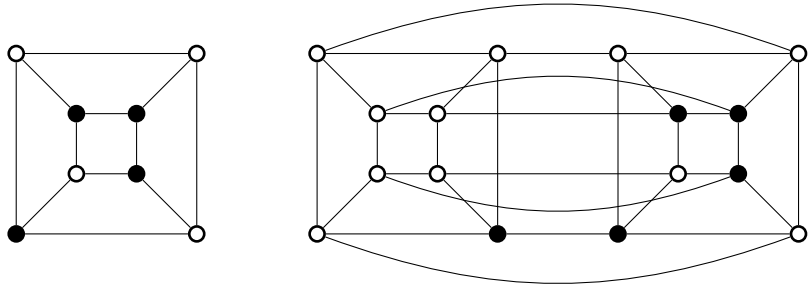

We can use the result from~\cite[Theorem 3.1]{bipartite} to attain a lower bound for $\sg(Q_n)$. The result states that if $G$ is a graph with $n = n(G)$ and $d = \diam(G) \geq 2$, then 
$$\sg(G) \geq \left \lceil \frac{d - 3 + \sqrt{(d-3)^2 + 8 n (d-1)}}{2 (d-1)} \right \rceil.$$
Using this we obtain

\begin{corollary}
	\label{cor:cubes_lower}
	If $n \geq 2$, then $$\sg(Q_n) \geq \left \lceil \frac{1}{\sqrt{n-1}} \cdot 2^{\frac{n+1}{2}} \right \rceil.$$
\end{corollary}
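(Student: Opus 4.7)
The plan is to apply the general lower bound recalled just before the corollary, using $n(Q_n)=2^n$ and $\diam(Q_n)=n\ge 2$. Substituting these values yields
$$\sg(Q_n) \;\ge\; \left\lceil \frac{(n-3) + \sqrt{(n-3)^2 + 2^{n+3}(n-1)}}{2(n-1)} \right\rceil,$$
and the remaining task is to show that the expression inside the ceiling is at least $\frac{1}{\sqrt{n-1}}\cdot 2^{(n+1)/2}$.

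To do that I would simply discard the nonnegative $(n-3)^2$ summand under the radical and use $n-3\ge 0$ (valid for $n\ge 3$) to drop the leading term as well; this gives
$$\frac{(n-3) + \sqrt{(n-3)^2 + 2^{n+3}(n-1)}}{2(n-1)} \;\ge\; \frac{2^{(n+3)/2}\sqrt{n-1}}{2(n-1)} \;=\; \frac{2^{(n+1)/2}}{\sqrt{n-1}}.$$
Taking ceilings of both sides then finishes the argument for every $n\ge 3$.

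The only case not covered by this real-valued estimate is $n=2$, for which $n-3=-1$ cannot simply be dropped. I would dispose of it by direct evaluation: the invoked theorem gives $\lceil (\sqrt{33}-1)/2\rceil = 3$, while the claimed quantity equals $\lceil 2\sqrt{2}\,\rceil = 3$, so the inequality holds trivially. This boundary case is the only mild obstacle; it is harmless because the two real numbers in question round up to the same integer, so the ceiling absorbs the discrepancy. No further technicalities are expected.
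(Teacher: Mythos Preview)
Your proposal is correct and follows the same route as the paper: apply the cited diameter bound with $n(Q_n)=2^n$, $\diam(Q_n)=n$, then bound the resulting expression below by $2^{(n+1)/2}/\sqrt{n-1}$. The paper simply asserts the second inequality in one line without justification; your argument (discarding the nonnegative terms for $n\ge 3$ and checking $n=2$ by hand) supplies the missing details and is in fact more careful than the original.
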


\begin{proof}
	Using~\cite[Theorem 3.1]{bipartite}, we get $$\sg(Q_n) \geq \left \lceil \frac{n - 3 + \sqrt{(n-3)^2 + 2^{n+3} (n-1)}}{2 (n-1)} \right \rceil \geq \left \lceil \frac{1}{\sqrt{n-1}} \cdot 2^{\frac{n+1}{2}} \right \rceil,$$
	which concludes the proof.
\end{proof}

On the other hand, we present a non-trivial upper bound, stating that approximately a square root of the number of vertices is enough to form a strong geodetic set.

\begin{theorem}
	\label{thm:cubes_upper}
	If $n \geq 1$, then 
	$$\sg(Q_n) \leq \begin{cases}
	 \frac{3}{2} \cdot 2^{\frac{n}{2}}, & n \text{ is even},\\
	2^{\frac{n+1}{2}}, & n \text{ is odd}.
	\end{cases}$$
\end{theorem}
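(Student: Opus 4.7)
The plan is to prove the recursive bound $\sg(Q_{n+2}) \leq 2\,\sg(Q_n)$ for every $n \geq 1$ and then to run induction on $n$ starting from the base values $\sg(Q_1)=2$ and $\sg(Q_2)=3$ (already listed just above the theorem), both of which match the claimed formula exactly. Doubling at each step then yields $\tfrac{3}{2}\cdot 2^{n/2}$ for even $n$ and $2^{(n+1)/2}$ for odd $n$.

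To establish the recurrence I would use the factorisation $Q_{n+2}\cong Q_n \cp C_4$, with the vertices of $C_4$ labelled cyclically $c_0,c_1,c_2,c_3$. Fix an optimal strong geodetic set $T$ of $Q_n$ together with its fixed geodesics $\widetilde g(x,y)$, and set $S:=(T\times\{c_0\})\cup(T\times\{c_2\})$, so that $|S|=2\,\sg(Q_n)$. For a pair within one of the two ``end levels'' $T\times\{c_i\}$, $i\in\{0,2\}$, I would use the product geodesic $\widetilde g(x,y)\times\{c_i\}$; collectively these cover $V(Q_n)\times\{c_0,c_2\}$ because $T$ is strong geodetic in $Q_n$. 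For a cross pair $\{(x,c_0),(y,c_2)\}$ with $x\neq y$, whose distance in the product is $d_{Q_n}(x,y)+2$, I would fix the geodesic that first steps from $(x,c_0)$ to $(x,c_j)$, then traverses $\widetilde g(x,y)$ in the $Q_n$-coordinate at level $c_j$, then steps from $(y,c_j)$ to $(y,c_2)$; this walk has length $d_{Q_n}(x,y)+2$ and visits exactly $V(\widetilde g(x,y))\times\{c_j\}$ in the middle level $c_j\in\{c_1,c_3\}$.

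The crux is the allocation of the cross pairs between the two middle levels. Each unordered $\{x,y\}\subseteq T$ with $x\neq y$ produces two distinct cross pairs in $S$, namely $\{(x,c_0),(y,c_2)\}$ and $\{(y,c_0),(x,c_2)\}$; I would route the first through $c_1$ and the second through $c_3$. Since $T$ is strong geodetic we have $\bigcup_{\{x,y\}\subseteq T}V(\widetilde g(x,y))=V(Q_n)$, so $V(Q_n)\times\{c_1\}$ and $V(Q_n)\times\{c_3\}$ both become entirely covered. The remaining diagonal cross pairs $\{(x,c_0),(x,c_2)\}$, of which there are $|T|$, can be routed arbitrarily through $c_1$ or $c_3$ since the single vertex they contribute is already in the cover.

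The main obstacle is bookkeeping: verifying that every pair in $S$ is used with a legitimate shortest path in $Q_n\cp C_4$, that the two ``sibling'' routings through $c_1$ and $c_3$ really do exhaust both middle levels, and that the counts $2\binom{|T|}{2}+|T|^2=|T|(2|T|-1)=\binom{2|T|}{2}$ match. The construction works cleanly whenever $|T|\ge 2$, which holds for all $n\ge 1$; it degenerates only for $n=0$, which lies outside the theorem. With the recurrence in hand, the induction is routine and produces both cases of the stated bound.
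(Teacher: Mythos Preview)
Your argument is correct. The recursion $\sg(Q_{n+2})\le 2\,\sg(Q_n)$ via $Q_{n+2}\cong Q_n\cp C_4$ works exactly as you describe: within-level geodesics cover the layers $c_0,c_2$, and the two sibling cross pairs $\{(x,c_0),(y,c_2)\}$ and $\{(y,c_0),(x,c_2)\}$ routed through $c_1$ and $c_3$ respectively cover the two middle layers, since $\bigcup_{\{x,y\}\subseteq T}V(\widetilde g(x,y))=V(Q_n)$ whenever $|T|\ge 2$. The base cases $\sg(Q_1)=2$ and $\sg(Q_2)=3$ then give the claimed formula by doubling.

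This is a genuinely different route from the paper's proof. There the construction is direct rather than recursive: for any $1\le n_0\le n$ one takes the two ``orthogonal'' subcubes $P=\{\overline{b}\,0\,0^{n_0-1}:\overline{b}\in\{0,1\}^{n-n_0}\}$ and $Q=\{1^{n-n_0}1c:c\in\{0,1\}^{n_0-1}\}$, shows that geodesics between all $P$--$Q$ pairs cover $Q_n$, and obtains $\sg(Q_n)\le 2^{n-n_0}+2^{n_0-1}$; optimising over $n_0$ gives the same bound you reach. Your recursion is arguably cleaner and has the pleasant feature that any improved value for small cubes (e.g.\ $\sg(Q_4)=5$) propagates upward automatically. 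On the other hand, the paper's one-parameter family is more malleable: in the subsequent Theorem~\ref{thm:cubes_upper2} the explicit subcube construction is refined by deleting vertices lying on internally disjoint geodesics inside $Q$, yielding an additive saving of order $n^2$ that does not fall out of the doubling recursion.
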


\begin{proof}
First we prove the intermediate result that for all $n_0$, $n \geq n_0 \geq 1$, it holds that $\sg(Q_n) \leq 2^{n - n_0} + 2^{n_0 - 1}$. 
	
Let $n_0$ be an integer, $n \geq n_0 \geq 1$. Denote $0^{k}$ as a string of $k$ zeros and $1^{k}$ as a string of $k$ ones. A hypercube $Q_n$ consists of $2 \cdot 2^{n-n_0}$ copies of hypercubes $Q_{n_0 - 1}$. These copies are labeled as $Q_{n_0 - 1}^b$, where $b \in \{0, 1\}^{n - n_0 + 1}$ and the vertices of the graph $Q_n$ are of the form $bc$, $b \in \{0, 1\}^{n - n_0 + 1}$, $c \in \{0, 1\}^{n_0 - 1}$.

Let $P =  \{ \overline{b}0 0^{n_0-1} :  \overline{b} \in \{0, 1\}^{n - n_0}\}$, $Q = \{ 1^{n-n_0}1c : \ c \in \{0, 1\}^{n_0 - 1} \} = V(Q_{n_0-1}^{1^{n-n_0+1}})$, and $S = P \cup Q$. Notice that $|S| = 2^{n - n_0} + 2^{n_0 - 1}$. Next, we prove that $S$ is a strong geodetic set of $Q_n$.

For each pair of vertices $\overline{b}00^{n_0 - 1} \in P$ and $1^{n-n_0}1c \in Q$ we fix the following geodesic (where $\rightsquigarrow$ denotes some shortest path between given vertices): $$\overline{b}00^{n_0 - 1} \rightsquigarrow \overline{b}0c \sim \overline{b}1c \rightsquigarrow 1^{n-n_0}1c\,.$$ As $\overline{b}$ and $c$ can be any strings of zeros and ones of the appropriate length, all vertices of the hypercube $Q_n$ are covered. Hence for all $n_0$, $n \geq n_0 \geq 1$, $$\sg(Q_n) \leq 2^{n - n_0} + 2^{n_0 - 1}\,.$$

Therefore, $\sg(Q_n) \leq \min\{2^{n - n_0} + 2^{n_0 - 1} : \ n_0 \in \mathbb{N}, n \geq n_0 \geq 1\}$. The minimum of this function for $n_0 \in \mathbb{R}$ is attained in $n_0 = \frac{n+1}{2}$. Thus the minimum of the integer valued function is in $\frac{n+1}{2}$ if $n$ is odd, and in either $\lfloor \frac{n+1}{2} \rfloor$ or $\lceil \frac{n+1}{2} \rceil$ if $n$ is even. If $n$ is odd, then the minimal value is $2^{\frac{n+1}{2}} = \sqrt{2} \cdot 2^{\frac{n}{2}}$. If $n$ is even, the value in both cases is $\frac{3}{2} \cdot 2^{\frac{n}{2}}$ and thus this is the minimal value. 
\end{proof}

By being more careful when selecting the geodesics, we can improve the bound as follows.

\begin{theorem}
	\label{thm:cubes_upper2}
	If $n \geq 2$, then 
	$$\sg(Q_n) \leq \begin{cases}
	\frac{3}{2} \cdot 2^{\frac{n}{2}} - \left( \lceil \frac{n+1}{2} \rceil - 2 \right) \left( \lceil \frac{n+1}{2} \rceil - 3 \right) , & n \text{ is even},\\
	2^{\frac{n+1}{2}} - \frac{(n-3)(n-5)}{4}, & n \text{ is odd}.
	\end{cases}$$
\end{theorem}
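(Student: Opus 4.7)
The plan is to refine the construction from Theorem~\ref{thm:cubes_upper}. Setting $n_0 = \lceil (n+1)/2 \rceil$, we take the same sets $P = \{\bar{b}\, 0\, 0^{n_0-1} : \bar{b} \in \{0,1\}^{n-n_0}\}$ and $Q = \{1^{n-n_0}\, 1\, c : c \in \{0,1\}^{n_0-1}\}$ as in that proof and aim to find $R \subseteq P \cup Q$ with $|R| = (n_0-2)(n_0-3)$ such that $S = (P \cup Q) \setminus R$ is still a strong geodetic set. The key flexibility is that every cross geodesic admits one shortest path per ordering of its differing bit positions, so by rescheduling each pair's fixed geodesic we can cover additional vertices without enlarging $S$.

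First I would record precisely which vertices can be visited by a rescheduled geodesic: the geodesic from $\bar{b}_1\, 0\, 0^{n_0-1}$ to $1^{n-n_0}\, 1\, c$ can be made to pass through $\bar{b}'\, d\, e$ exactly when $\bar{b}_1 \subseteq \bar{b}' \subseteq 1^{n-n_0}$ and $e \subseteq c$, by placing the ``first-bit'' flips for $\bar{b}' \setminus \bar{b}_1$, the ``last-bit'' flips for $e$, and (if $d = 1$) the middle-bit flip at the start of the ordering. Next, I would split $R = R_P \cup R_Q$ with $|R_P| = |R_Q| = \binom{n_0-2}{2}$, putting into $R_P$ vertices $\bar{b}\, 0\, 0^{n_0-1}$ whose $\bar{b}$ lies close to $1^{n-n_0}$ (so that sufficiently many $\bar{b}_1 \subsetneq \bar{b}$ remain available to cover them via the tool above) and symmetrically into $R_Q$ vertices $1^{n-n_0}\, 1\, c$ whose $c$ lies close to $0^{n_0-1}$. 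For each $v \in R$, I would designate a specific pair in $S \times S$ whose geodesic is rescheduled to visit $v$, and in parallel check that the vertices of $Q_n \setminus S$ originally covered by geodesics incident to $v$ are re-covered by other pairs; this is plausible because each non-$S$ vertex sits on many cross geodesics in the original scheme.

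The main obstacle will be the combinatorial consistency of the rescheduling: each pair receives only one fixed geodesic, so the schedule I dedicate to covering a vertex of $R$ cannot simultaneously discharge some other covering duty of the same pair. I expect the precise count $(n_0-2)(n_0-3) = 2\binom{n_0-2}{2}$ to emerge from a matching in which each vertex of $R_P$ consumes the scheduling freedom of exactly one pair on the ``first-bits'' side and each vertex of $R_Q$ consumes one on the ``last-bits'' side, with the remaining pairs still collectively discharging all original non-$S$ covering duties. Establishing that this matching exists and that the resulting geodesic assignment covers every vertex of $Q_n$ will be the main technical step.
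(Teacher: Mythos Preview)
Your plan leaves the hardest step open: you yourself flag that ``establishing that this matching exists and that the resulting geodesic assignment covers every vertex of $Q_n$'' is the main technical step, and you do not carry it out. The consistency worry is real --- each pair receives one fixed geodesic, and you have not shown how to allocate pairs so that every vertex of $R$ is hit, every vertex formerly covered by a geodesic incident to $R$ is still covered, and no pair is double-booked. As it stands this is a programme, not a proof.

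The paper sidesteps this bookkeeping with a different, \emph{asymmetric} construction: it removes all $(n_0-2)(n_0-3)$ vertices from $Q$ alone, not from both sides. The device is the pivot vertex $v = 1^{n-n_0+1}0^{n_0-1}$, which lies in $Q$ but shares the suffix $0^{n_0-1}$ with every element of $P$; a geodesic from any $p\in P$ to a target in $Q$ may therefore be routed to reach $v$ first (changing only prefix bits) and then continue from $v$ to the target entirely inside the single copy $Q_{n_0-1}^{1^{n-n_0+1}}$. One then fixes $n_0-1$ internally vertex-disjoint shortest paths in that copy from $v$ to its antipode $u = 1^n$, deletes from $Q$ their interior vertices except those in the two layers nearest $u$, and uses the geodesics from $P$ to the retained near-$u$ vertices $x_1, y_2, \ldots, y_{n_0-1}$ --- routed through $v$ and along the disjoint paths --- to cover everything that was deleted. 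All other $P\times Q$ geodesics keep the original Theorem~\ref{thm:cubes_upper} scheme. There is no matching to verify: the extra covering is discharged by geodesics with designated endpoints, so the difficulty you anticipated simply does not arise.
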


\begin{proof}
	Using the notation from the proof of Theorem~\ref{thm:cubes_upper}, the main step is to prove that for all $n_0$, $n \geq n_0 \geq 4$, $$\sg(Q_n) \leq 2^{n - n_0} + 2^{n_0 - 1} - (n_0 - 2) (n_0-3) \,.$$
	From this it follows that by using $n_0 = \lceil \frac{n+1}{2} \rceil$, the result is obtained and the bound from Theorem~\ref{thm:cubes_upper} is improved if $\lceil \frac{n+1}{2} \rceil \geq 4$, ie.\ $n \geq 6$.
	
	Let $v, u \in V(Q_{n_0-1}^{1^{n-n_0+1}})$ be some vertices at distance $n_0$. Without loss of generality, we take $v = 1^{n-n_0+1}0^{n_0-1}$ and $u = 1^{n}$. There is $n_0-1$ internally disjoint shortest paths from $v$ to $u$, set $\mathcal{P}$ as the set of vertices they cover. Let $x_1, \ldots, x_{n_0-1}$ be the neighbors of $u$ on these paths and $y_1, \ldots, y_{n_0-1}$ the other neighbors of $x_i$'s on these paths. Let $F = \mathcal{P} - \{u, v, x_1, \ldots, x_{n_0-1}, y_2, \ldots, y_{n_0-1}\} $.
	
	Now let $P =  \{ \overline{b}0 0^{n_0-1} :  \overline{b} \in \{0, 1\}^{n - n_0}\}$, $Q = \{ 1^{n-n_0}1c : \ c \in \{0, 1\}^{n_0 - 1} - F \} = V(Q_{n_0-1}^{1^{n-n_0+1}}) - F$, and $S = P \cup Q$. Clearly, $|S| = 2^{n - n_0} + 2^{n_0 - 1} - (n_0 - 2) - (n_0-4)(n_0 - 2) = 2^{n - n_0} + 2^{n_0 - 1} - (n_0 - 2) (n_0-3)$. Next, we prove that $S$ is a strong geodetic set of $Q_n$.
	
	For each pair of vertices $\overline{b}00^{n_0 - 1} \in P$ and $1^{n-n_0}1c \in Q$ we fix the following geodesic (where $\rightsquigarrow$ denotes some shortest path between given vertices, and this path follows $\mathcal{P}$ if the endpoints are appropriate): 
	$$\overline{b}00^{n_0 - 1} \rightsquigarrow \overline{b}0c \sim \overline{b}1c \rightsquigarrow 1^{n-n_0}1c, \quad c \in Q - \{x_1, y_2, \ldots, y_{n_0-1}\},$$
	$$\overline{b}00^{n_0 - 1} \rightsquigarrow  1^{n-n_0}10^{n_0 - 1} \rightsquigarrow  1^{n-n_0}1c, \quad c \in \{x_1, y_2, \ldots, y_{n_0-1}\}.$$
	As shortest paths in $\mathcal{P}$ get covered with the above geodesics, and other vertices are clearly covered, $S$ is a strong geodetic set.
\end{proof}

Some values given by the Theorem are presented in Table~\ref{tbl:values}. Note that asymptotically, the ratio between the lower and upper bound is of order $\frac{1}{\sqrt{n}}$.

\begin{table}[H]
	\centering
	\begin{tabular}{| c || c | c | c | c | c | c | c | c | c | c | c | c | c | c | c |}
		\hline
		$n$ & 1 & 2 & 3 & 4 & 5 & 6 & 7 & 8 & 9 & 10 & 11 & 12 & 13 & 14 & 15 \\ \hline \hline
		$\sg(Q_n) \geq $ &   & 3 & 3 & 4 & 4 & 6 & 7 & 9 & 12 & 16 & 21 & 28 & 37 & 51 & 69 \\ \hline \hline
		$\sg(Q_n) \leq $ &  & &  &  &  & 10 & 14 & 18 & 26 & 36 & 52 & 76 & 108 & 162 & 226 \\ \hline
	    $\sg(Q_n) \leq $ & 2 & 3 & 4 & 6 & 8 & 12 & 16 & 24 & 32 & 48 & 64 & 96 & 128 & 192 & 256 \\

		\hline
	\end{tabular}
	\caption{The lower and both upper bounds on $\sg(Q_n)$ given by Corollary~\ref{cor:cubes_lower}, Theorem~\ref{thm:cubes_upper2} and Theorem~\ref{thm:cubes_upper}.}
	\label{tbl:values}
\end{table}

It would be interesting to have an explicit formula for $\sg(Q_n)$ or at least know the complexity of determining the strong geodetic number of $Q_n$.

\section*{Acknowledgments}
\label{sec:Acknowledgments}

The authors would like to thank Sandi Klav\v zar and Matja\v z Konvalinka for a number of fruitful conversations.


\end{document}